\def\Z{{\mathbb{Z}}}
\newtheorem{theorem}{Theorem}[section]
\newtheorem{lemma}[theorem]{Lemma}
\theoremstyle{definition} 
\newtheorem{remark}[theorem]{Remark}
\newtheorem{assumption}[theorem]{Assumptions}
\numberwithin{equation}{section}
\begin{document}

\baselineskip=15pt

\title[Stability of parabolic Picard sheaf]{Stability of the parabolic Picard sheaf}

\author[ C. Arusha]{ C. Arusha}
\address{Department of Mathematics, Indian Institute of Technology Bombay, Powai, Mumbai, Maharashtra 400076, India}
\email{arushnmaths@hotmail.com, arusha@math.iitb.ac.in}

\author[I. Biswas]{Indranil Biswas}

\address{Department of Mathematics, Shiv Nadar University, NH91, Tehsil
Dadri, Greater Noida, Uttar Pradesh 201314, India}

\email{indranil.biswas@snu.edu.in, indranil29@gmail.com}

\subjclass[2000]{14F05, 14H60}
	
\keywords{Stability, Picard bundle, parabolic bundle}

\date{}
	
\begin{abstract}
Let $X$ be a smooth irreducible complex projective curve of genus $g\,\geq\, 2$, and let
$D\,=\,x_1+\dots+x_r$ be a reduced effective divisor on $X$. 
Denote by $U_{\alpha}(L)$ the moduli space of stable parabolic vector bundles on $X$ of rank $n$,
determinant $L$ of degree $d$ with flag type $\{\{k^i_j\}_{j=1}^{m_i}\}_{i=1}^r$. Assume that
the greatest common divisor of the collection of integers $\{\text{degree}(L),\,
\{\{k^i_j\}_{j=1}^{m_i}\}_{i=1}^r\}$ is $1$; this condition ensures that there is a Poincar\'e
parabolic vector bundle on $X\times U_{\alpha}(L)$. The direct image, to $U_{\alpha}(L)$, of the
vector bundle underlying the Poincar\'e parabolic vector bundle
is called the parabolic Picard sheaf. We prove that the parabolic Picard sheaf is stable.
\end{abstract}
\maketitle
	
\section{Introduction}

Picard sheaves were introduced in the projectivised form in the most general setting by A. Mattuck \cite{AM}. 
For a smooth irreducible projective curve $X$ of genus $g\,\geq\, 2$, they were studied in \cite{RLES} and 
\cite{GK} for the Jacobian of $X$. The study of stability of Picard sheaves on the moduli spaces of vector 
bundles on $X$ has been carried out in \cite{BBGN} and \cite{IBTG}. The work \cite{BBN} generalizes the 
definition of a Picard sheaf to include all twists of it by a vector bundle and also studies their stability
property.

Although the 
existence of Picard sheaves require the degree and rank to be relatively prime, there is a projective Picard 
bundle in the non co-prime case, and it is stable \cite{BBN2}. The study of Picard and projective Picard bundles 
on the moduli space of vector bundles on an irreducible nodal curve are carried out in \cite{UBAJP}, 
\cite{UNB} and \cite{AUS}. For a complete survey on the existing literature we refer the reader to \cite{BBN} 
and the references therein.
 
In this short note, we fix a reduced effective divisor $D\,=\,x_1+\dots+x_r$ on $X$ and prove that
the Picard sheaf on the moduli space of parabolic bundles on $X$ with a fixed parabolic datum is stable;
see Theorem \ref{main}.
	
\section{Parabolic Picard sheaf}
	
	Let X be a smooth irreducible complex projective curve of genus $g$, with $g\,\geq\, 2$. Fix an integer $n\,\geq\, 2$ and
a line bundle $L$ over $X$ of degree $d$. Also, fix $r$ distinct points $D\,=\, \{x_1,\,\cdots,\,x_r\}\, \subset\, X$, and denote
also by $D$ the divisor $x_1+\dots+x_r$. Let $E$ be a holomorphic vector bundle on $X$ of rank $n$. 
	
	A quasi-parabolic structure over $E$ over $D$ is a strictly decreasing filtration of linear subspaces 
\begin{equation}\label{c1}
E_{x_i}\,=\,F^i_1\,\supset\, F^i_2\,\supset\,\cdots\, \supset\, F^i_{m_i}\,\supset\, F^i_{m_{i+1}}\,=\,0
\end{equation}
for every $x_i\,\in\, D$. Let
\begin{equation}\label{c2}
k^i_j\ :=\ \dim F^i_j-\dim F^i_{j+1}.
\end{equation}
The integer $m_i$ in \eqref{c1} and the sequence $(k^i_1,\,\cdots,\,k^i_{m_i})$ in \eqref{c2}
are respectively called the length and the type of the flag at $x_i$. A parabolic structure on $E$ over
the divisor $D$ is a quasi-parabolic structure as above together with a sequence of real numbers
\begin{equation}\label{c3}
0\,\leq\,\alpha^i_1\,<\, \cdots\,<\,\alpha^i_{m_i}\, <\, 1,
\end{equation}
$1\leq\, i\, \leq\, r$, called the weight system of the flag at $x_i$.
The parabolic degree of $E$ is defined as
$${\rm par }\deg E\,:=\,\deg E+\sum_{x_i\in D}\sum_{j=1}^{m_i}k^i_j\alpha^i_j\,\in\, {\mathbb R}$$
(see \eqref{c2}, \eqref{c3}).

\noindent With the notation same as above, any subbundle $V\,\subset\, E$ has an induced parabolic structure with the quasiparabolic filtration at $x_i$ given by
	$$V_{x_i}\,=\,F^i_1\cap V_{x_i}\,\supset\, F^i_2\cap V_{x_i}\,\supset\,\cdots\,\supset\, F^i_{\ell_i}\cap V_{x_i}\,\supset\, 0$$
where $\ell_i:\,=\, \max\{j\,\in\,\{1,\,\cdots,\, m_i\}\,\big\vert\,\, F^i_j\cap V_{x_i}\,\neq\, 0\}$ and the induced parabolic
weight of $F^i_j$ is $\max\{\alpha^i_{\ell}\, \big\vert\,\,F^i_j\cap V_{x_i}\,=\,F^i_{\ell}\cap V_{x_i}\}$.

\noindent A parabolic vector bundle $E$ with parabolic structure over $D$ is called stable (respectively, semistable) if for every proper
non-trivial subbundle $F\,\subsetneq\, E$ endowed with the induced parabolic structure, the following inequality holds:
	$$\frac{{\rm par }\deg F}{{\rm rk\ } F}\,<\,\frac{{\rm par }\deg E}{{\rm rk\ }E}\ \ \,
\text{(respectively, }\ \frac{{\rm par }\deg F}{{\rm rk\ } F}\,\leq\,\frac{{\rm par }\deg E}{{\rm rk\ }E}\text{).}$$

\noindent Denote by
\begin{equation}\label{cm}
U_X(d,n,\{\alpha^i_j\}, \{k^i_j\})
\end{equation}
the coarse moduli space of semistable parabolic 
vector bundles of rank $n$, degree $d$ with flag type $\{k^i_j\}$ and parabolic weights 
$\{\alpha^i_j\}$ at $x_i$, $1\,\leq\, i\,\leq\, r$. Then $U_X(d,n,\{\alpha^i_j\}, \{k^i_j\})$ is 
a normal projective variety, and the open subvariety
$$
U_X^s(d,n,\{\alpha^i_j\}, \{k^i_j\})\ \subset\ U_X(d,n,\{\alpha^i_j\}, \{k^i_j\})
$$
(see \eqref{cm}) 
parametrizing the stable parabolic bundles is smooth \cite{VMCS}. If the elements of the set 
$\{d,\,k^i_j\,\mid\,\, 1\,\leq\, i\,\leq\, r,\ 1\,\leq\, j \,\leq\, m_i\}$ have greatest common 
divisor equal to one, then
\begin{equation}\label{cm2}
U_{\alpha}^s\ :=\ U_X^s(d,n,\{\alpha^i_j\}, \{k^i_j\})
\end{equation}
is a fine moduli space. In other words, there exists a family of parabolic vector bundles 
\begin{equation}\label{cm3}
\mathcal{U}_{*}^{\alpha}\ :=\ (\mathcal{U},\phi,\alpha)
\end{equation}
parametrized by $U_{\alpha}^s$ such 
that for each $e\,=\,[E_*]\,\in\, U_{\alpha}^s$, we have $\mathcal{U}_{*,e}^{\alpha}$ to be a 
stable parabolic bundle isomorphic to $E_*$ \cite[Proposition 3.2]{HBKY}. In such a situation, 
the notions of stability and semistability coincide and moreover the moduli space 
\begin{equation}\label{c4}
U_{\alpha}\,:=\, U_X(d,n,\{\alpha^i_j\}, \{k^i_j\}) \,=\,  U_X^s(d,n,\{\alpha^i_j\}, \{k^i_j\})\,=:\,
U_{\alpha}^s
\end{equation}
is a smooth irreducible projective variety.

\begin{assumption}\label{ams1}\mbox{}
\begin{enumerate}
\item Henceforth we would assume that the collection of integers
$\{d,\,k^i_j\,\mid\,\, 1\,\leq\, i\,\leq\, r,\ 1\,\leq\, j \,\leq\, m_i\}$ have greatest common
divisor equal to one.

\item We assume that the parabolic structure is non-trivial. This means that for some $1\, \leq\, i
\, \leq\, r$, we have $m_i\, \geq\, 2$.
\end{enumerate}
\end{assumption}

\begin{remark}{\textit{Families of parabolic vector bundles:}}
For a scheme $T$, let
\begin{equation}\label{z1}
\pi_1\,:\,X\times T\,\longrightarrow\, X\ \, \text{ and }\, \
\pi_2\,:\,X\times T\,\longrightarrow\, T
\end{equation}
denote the natural projections. For every
$1\,\leq\, i\,\leq\, r$, let $\mathcal F_{k_i}$ denote the variety of flags of type $k_i\,=\,(k_1^i,\,\cdots,\,k_{m_i}^i)$, where
$\sum_{j=1}^{m_i}k^i_j\,=\,n$. Now, for a rank $n$ vector bundle $E\,\longrightarrow\, T$, denote by
$$\mathcal F_{k_i}(E)\,\longrightarrow\, T$$ the bundle of flags of type $k_i$.
Fix multiplicities $k_i$ for each $x_i\,\in\, D$. A family of quasi-parabolic bundles (of type $k\,=\,(k_1,\, \cdots,\, k_r)$)
parametrized by a scheme $T$ is
a pair $(V,\,\phi)$ where $V$ is a vector bundle $V\,\longrightarrow\, X\times T$ and $\phi$ is a collection of sections
$$\{\phi_{x_i}\,:\,T\,\to\,\mathcal F_{k_i}(V|_{x_i\times T})\}_{1\leq i\leq r}.$$ A family of parabolic bundles is then a triple
$$V_*\,=\,(V,\,\phi,\,\alpha),$$ where $\alpha$ associates weights $\{\alpha^i_j\}$ to the flag of subbundles over $x_i\times S$ for
each $x_i\,\in\, D$. Let $(V,\,\phi)$ and $(V',\,\phi')$ be families parametrized by $T$. We say $(V,\,\phi)$ is equivalent to
$(V',\,\phi')$ if there exists a line bundle $N$ over $T$ and an isomorphism
$$V\,\,\cong\,\, V'\otimes\pi_2^*N,$$
where $\pi_2$ is the projection in \eqref{z1}, that sends $\phi$ to $\phi'$.
\end{remark}
	
\begin{remark}\label{defn}
There is a determinant morphism $$\det\,:\, U_{\alpha}\,\longrightarrow\, J^d(X)$$
(see \eqref{c4}),
where $J^d(X)$ denotes the component of the Picard group of $X$ consisting of line bundles of degree $d$; this map
sends a parabolic vector bundle to the top exterior product of the underlying vector bundle.
For $L\,\in\, J^d(X)$, let
\begin{equation}\label{d-1}
U_{\alpha}(L) \ :=\ {\det}^{-1}(L)
\end{equation}
denote the fiber of the map $\det$ over $L$, and let
\begin{equation}\label{d-2}
{\mathcal U}_L \ :=\ \mathcal U_*^{\alpha}\big\vert_{U_{\alpha}(L)}
\end{equation}
be the restriction of ${\mathcal U}_*^{\alpha}$ (see \eqref{cm3}) to $X\times U_{\alpha}(L)$ (see \eqref{d-1}). 

As before, let $\pi_2\, :\, X\times U_{\alpha}(L)\, \longrightarrow\, U_{\alpha}(L)$ be the natural projection.
The direct image sheaf 
\begin{equation}\label{k1}
{\mathcal W}_L:= {\pi_2}_*{\mathcal U}_L
\end{equation}
on $U_{\alpha}(L)$ is called the \textit{parabolic Picard sheaf}. 
\end{remark}
	
\section{Stability}
	
\subsection{Determinant bundle on $U_{\alpha}(L)$}
	
We fix a closed point $x\,\in\, X$ and rational parabolic weights $0\,\leq\,\alpha_1
\,<\,\alpha_2\,<\,\cdots\,<\,\alpha_n\,<\,1$. Let $\mathcal V_*^{\alpha}\,=\,(\mathcal V,\, \phi,\, \alpha)$ be a family of rank $n$
stable parabolic bundles on $X$ parametrized by a variety $T$ with $\phi$ determining the full flag of subbundles
\begin{equation}\label{c-1}
\mathcal V|_{x\times T}\,=\,\mathcal F_{1,x}\,\supset\,\cdots\, \supset\, \mathcal F_{n,x}\,\supset\,\mathcal F_{n+1,x}\,=\,0
\end{equation}
on $x\times T$ and let $\Phi\,:\,T\,\longrightarrow\, U_{\alpha}(L)$ (see \eqref{d-1}) be the morphism induced by it. Define
\begin{equation}\label{c-2}
\mathcal Q_j\,:=\,\frac{\mathcal F_{j,x}}{\mathcal F_{j+1,x}}
\end{equation}
(see \eqref{c-1}). Fix an integer $k\,\in\, \mathbb Z$ such that
$\beta_i\,=\,k\alpha_i$ is an integer for every $i$. Let $d_i\,:=\,\beta_{i+1}-\beta_i$, $1\,\leq\, i\,<\,n$
and $d_n\,:=\,k-\beta_r$. Define the line bundle
\begin{equation}\label{c-3}
{\mathcal L}_T\,:=\,(\det R\pi_{2*}\mathcal V)^k\otimes \det(\mathcal V_x)^{\ell}\otimes \bigotimes_{j=1}^n\mathcal Q_j^{d_j}
\end{equation}
(see \eqref{c-2}), where $\pi_2$ is the projection in \eqref{z1} and
$\det R\pi_{2*}\mathcal V$ is the determinant line bundle defined as 
$$\{\det R\pi_{2*}{\mathcal V}\}_t\,:=\,\{\det H^0(X,\mathcal V_t)\}^{-1}\otimes
\{\det H^1(X,\mathcal V_t)\}$$
for $t\, \in\, T$; the integer $\ell\,\in \,\Z_{>0}$ satisfies the equation
$$\sum_{i=1}^n d_i ({\rm rk}(\mathcal F_{1,x})-{\rm rk}(\mathcal F_{i,x}))+n\ell\,=\,k(d+n(1-g))$$
\cite[p.~7, Eqn.~(*)]{XS}. Since we are considering full flags, the above equation reduces to 
$$\sum_{i=1}^n d_i (n-i)+n\ell\,=\,k(d+n(1-g)).$$ It follows from \cite[Theorem 1.2]{XS} that there exists an ample line
bundle $\widehat{\mathcal L}$ over $U_{\alpha}(L)$ in \eqref{d-1} (which is unique up to algebraic equivalence) such that
$\Phi^*\widehat{\mathcal L}\,=\,{\mathcal L}_T$.
	
\subsection{Degree of a torsion-free sheaf on $U_{\alpha}(L)$}
	
Recall that, for a smooth projective variety $Y$ of dimension $m$ and an ample line bundle $N$ over it, the degree of a
torsion-free sheaf $\mathcal F$ with respect to $N$ is defined as
$$\deg \mathcal F\,:=\,c_1(\mathcal F)\cdot c_1(N)^{m-1}[Y].$$
For $Y\,=\,U_{\alpha}(L)$ (see \eqref{d-1}), we take $N\,=\,\widehat{\mathcal L}$; for $Y\,=\,\mathbb P^n$, we take $N\,=\,\mathcal O_{\mathbb P^n}(1)$. With this
notion of degree, we discuss the slope-stability of the Picard sheaf on the moduli space. 
	
\subsection{Hecke transformations and $(\ell,m)$-stability of parabolic bundles}
	
A parabolic vector bundle $E_*$ is called $(\ell,\,m)$-stable if for every proper subbundle $F$ of $E_*$, the inequality
$$\frac{{\rm par }\deg F_*+\ell}{{\rm rk\ }F}\,<\,\frac{{\rm par }\deg E_*+\ell-m}{{\rm rk\ }E}$$ holds.
It is straightforward to check that for $\ell,\,m\,\geq\, 0$, an $(\ell,\,m)$-stable parabolic bundle is $(0,\,0)$ stable; i.e., stable (in the usual sense). 
	
Let $y\,\in\, X$ be a closed point with $y\,\notin\, D$. Let $F_*$ denote the kernel of the following elementary transformation of $E_*\,\in\, U_{\alpha}(L)$
(see \eqref{d-1}) at $y$:
\begin{equation}\label{ET}
0\,\longrightarrow\, F_* \,\longrightarrow\, E_*\,\xrightarrow{\,\,\,\sigma\,\,\,}\, \mathbb C_y\,\longrightarrow\, 0.
\end{equation}
Here $F_*$ has the parabolic structure induced through the inclusion map $F\,\hookrightarrow\, E$.
The parabolic bundles satisfying the above exact sequence has the following properties: 
\begin{enumerate}
\item If $E_*$ is $(\ell,\,m)$-stable, then $F_*$ is $(\ell,\,m-1)$-stable. 

\item If $F_*$ is $(\ell,\, m)$-stable, then $E_*$ is $(\ell-1,\, m)$-stable. 
\end{enumerate}
	Therefore, if $E_*$ is $(0,1)$-stable, then $F_*$ is stable. Moreover, the condition $\det E\,=\,L$ implies
that $\det F\,=\,L(-y)$. It follows that,
	$${\rm par} \deg F_*\,=\,{\rm par}\deg E_* -1.$$
	
Now, for each homomorphism $\sigma\,:\, E\,\longrightarrow\, \mathbb C_y$ (recall that $\mathbb C_y$ denotes the torsion sheaf of
length 1 supported at $y$), we have $\ker \sigma$ to be a locally free sheaf and it has an associated parabolic
structure. Let $(E_{\sigma})_*$ denote this parabolic vector bundle of rank $n$, degree $d-1$, determinant $L(-y)$ whose
parabolic structure is induced by $E$ via the inclusion map. Note that for $\lambda\in\mathbb C$, $E_{\sigma}\,=\,E_{\lambda\sigma}$. Therefore, the set of
all homomorphisms $\sigma$ that gives distinct parabolic bundles is the projective space $\mathbb P (E_y^*)$ (the space of
lines in $E_y^*$). In other words, fixing $E$ and $y$, we obtain a family of parabolic vector bundles $\mathcal F$ on $X$ parametrized
by $\mathbb P(E_y^*)$, which fits into the exact sequence
\begin{equation}\label{family}
0\,\longrightarrow\,\mathcal F\,\longrightarrow\, \pi_1^*(E_*)\,\longrightarrow\, \mathcal{O}_{\{y\}\times \mathbb P(E_y^*)}(1)
\,\longrightarrow\, 0.
\end{equation}
The vector bundle $\mathcal F$ comes with a natural parabolic structure over the smooth divisor $D\times \mathbb P(E_y^*)$.
	
For our purposes, we take $E_*\,\in\, U_{\alpha}(L(y))$. From the above discussions we know that if $E_*$ is a $(0,\,1)$-stable bundle,
then $F_*$ is stable. In other words, if $E_*$ is a $(0,1)$-stable bundle, then $\mathcal F$ is a family of stable parabolic
vector bundles of rank $n$, degree $d$ and determinant $L$ which is parametrized by $\mathbb P(E_y^*)$. By the universal property
of the moduli space $U_{\alpha}(L)$ (see \eqref{d-1}), there is a morphism 
\begin{equation}\label{psi}
    \Psi_{E_*,y}\,\,:\,\, \mathbb P(E_y^*)\,\,\longrightarrow\,\, U_{\alpha}(L)
\end{equation}
such that 
\begin{equation}\label{FandU}
\mathcal F\,\,\cong\,\,({\rm Id}_X\times\Psi_{E_*,y})^*{\mathcal U}_L\otimes\pi_2^*\mathcal O_{\mathbb P(E_y^*)}(-j)
\end{equation}
(see \eqref{d-2}) for some integer $j$, and they are equivalent as families. 
	
	The following result ensures that we can always construct such families. 
	
\begin{lemma}\label{open}
The $(0,\,1)$-stable bundles and $(1,\,0)$-stable bundles form a nonempty Zariski open subset of $U_{\alpha}^s$ (see \eqref{c4}).
\end{lemma}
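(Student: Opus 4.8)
The plan is to prove two things separately: (i) for each fixed pair $(\ell,\,m)$ the set of $(\ell,\,m)$-stable parabolic bundles is Zariski open in $U_\alpha^s$, and (ii) for $(\ell,\,m)\,=\,(0,\,1)$ and for $(\ell,\,m)\,=\,(1,\,0)$ this set is nonempty. Since $U_\alpha^s$ is irreducible (see \eqref{c4}), (i) and (ii) together show that each of these two loci is a nonempty Zariski open subset, and hence so is their intersection; this is the assertion.

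For (i) I would use the usual boundedness argument. If $E_*\,\in\, U_\alpha^s$ fails to be $(\ell,\,m)$-stable, there is a proper subbundle $F_*$, of some rank $p$ with $1\,\leq\, p\,\leq\, n-1$, with
$$\frac{{\rm par}\deg F_*+\ell}{p}\,\geq\,\frac{{\rm par}\deg E_*+\ell-m}{n}.$$
Because $E_*$ is stable, this confines ${\rm par}\deg F_*$ to a bounded interval; and as the parabolic weights are fixed rationals, ${\rm par}\deg F_*$ --- and therefore $\deg F$ together with the finitely many possible induced flag types at the $x_i$ --- takes only finitely many values. Consequently the destabilizing subsheaves vary in a bounded family, so, working with the Poincar\'e family $\mathcal U_*^\alpha$ over $U_\alpha^s$ and the relative parabolic Quot schemes of quotients of these finitely many parabolic types, the non-$(\ell,\,m)$-stable locus is the image of a proper $U_\alpha^s$-scheme, hence is closed.

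For (ii) I would run a dimension estimate, modeled on the classical (non-parabolic) case. Let $Z\,\subseteq\, U_\alpha^s$ be the locus of those (stable) parabolic bundles that are not $(0,\,1)$-stable. Each $E_*\,\in\, Z$ has a proper subbundle $F_*$ of some rank $p$ with
$$\frac pn{\rm par}\deg E_*-\frac pn\,\leq\,{\rm par}\deg F_*\,<\,\frac pn{\rm par}\deg E_*,$$
so $F_*$ has one of finitely many parabolic types $\tau$, and $Z\,=\,\bigcup_\tau Z_\tau$. Fix $\tau$; put $q\,=\,n-p$ and let $Q_*\,:=\,E_*/F_*$ carry the quotient parabolic structure. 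After refining by the Harder--Narasimhan types of $F_*$ and of $Q_*$ (which only lowers dimensions) we may treat $F_*$ and $Q_*$ as semistable. Each $E_*\,\in\, Z_\tau$ is an extension of $Q_*$ by $F_*$, so $Z_\tau$ is dominated by the family of triples $(F_*,\,Q_*,\,\xi)$ with $\xi\,\in\,{\rm Ext}^1_{\rm par}(Q_*,\,F_*)$; and the displayed inequalities give $\mu_{\rm par}(F_*)\,<\,\mu_{\rm par}(E_*)\,<\,\mu_{\rm par}(Q_*)$, whence ${\rm Hom}_{\rm par}(Q_*,\,F_*)\,=\,0$ and $\dim{\rm Ext}^1_{\rm par}(Q_*,\,F_*)\,=\,-\chi_{\rm par}(Q_*,\,F_*)$ is computed by parabolic Riemann--Roch. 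Adding up the dimension of the moduli of $F_*$ (namely $p^2(g-1)+1$ plus the dimension of its flag bundle), the analogous quantity for $Q_*$, and $\dim{\rm Ext}^1_{\rm par}(Q_*,\,F_*)$, and subtracting the $1$ coming from scaling $\xi$, one arrives at a bound of the shape
$$\dim Z_\tau\,\leq\,\dim U_\alpha^s-\big(pq(g-1)+b_\tau-\delta_\tau\big),$$
where $b_\tau\,\geq\, 0$ collects the contribution of the parabolic flag data at the points of $D$ and $\delta_\tau$ is a nonnegative integer, bounded in terms of $n$, recording how nearly ${\rm par}\deg F_*$ can realize the left endpoint of the interval above.

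The crucial --- and I expect hardest --- step is then to check $pq(g-1)+b_\tau-\delta_\tau\,>\,0$ for every $\tau$ that occurs. When $g\,\geq\, 3$ this is immediate from $\delta_\tau\,\leq\, n-1\,\leq\, pq$ (the last step being $(p-1)(q-1)\,\geq\, 0$) together with $pq(g-1)\,\geq\, 2pq$. When $g\,=\,2$ one only has $pq(g-1)\,=\,pq$, and in the borderline configurations --- $p$ or $q$ equal to $1$, with $\delta_\tau$ as large as possible --- the required strict inequality has to come from $b_\tau\,>\,0$; this is exactly where Assumption \ref{ams1}(2) enters, since the non-triviality of the parabolic structure forces a strictly positive flag contribution at the point $x_i$ with $m_i\,\geq\, 2$. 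Granting this, $\dim Z_\tau\,<\,\dim U_\alpha^s$ for every $\tau$, so, $U_\alpha^s$ being irreducible, $Z$ is a proper closed subset; hence the $(0,\,1)$-stable locus is nonempty (and open, by (i)). Finally, $E_*$ is $(1,\,0)$-stable if and only if its parabolic dual is $(0,\,1)$-stable, and parabolic dualization is an isomorphism of the relevant moduli spaces; so the $(1,\,0)$-stable locus is likewise nonempty and open, and intersecting the two completes the proof.
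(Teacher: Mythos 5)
The paper does not actually prove this lemma: its ``proof'' is a citation to \cite[Proposition~2.7]{UBIB2}, with the remark that Assumption~\ref{ams1}(2) is what makes that proposition applicable. What you have written is essentially a reconstruction of the proof of the cited proposition, along the classical Narasimhan--Ramanan lines: openness of the $(\ell,m)$-stable locus via boundedness of the family of destabilizing subsheaves, nonemptiness via a dimension count on the locus of stable-but-not-$(0,1)$-stable bundles stratified by the numerical type of the destabilizing subbundle, and a dualization to transfer the statement to $(1,0)$-stability. The architecture is right, and your identification of where Assumption~\ref{ams1}(2) must enter --- the borderline case $g=2$ with $p$ or $q$ equal to $1$, where $pq(g-1)-\delta_\tau$ can vanish and only the flag contribution $b_\tau$ can save the strict inequality --- is exactly the point the paper is signalling when it says the non-trivial parabolic structure is needed. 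That said, as a self-contained proof your write-up still has a gap precisely at the step you yourself flag as hardest: you assert, but do not verify, that $b_\tau>0$ for every induced flag type $\tau$ that can occur on a destabilizing subbundle when some $m_i\geq 2$. This requires actually computing the parabolic correction to $\dim\operatorname{Ext}^1_{\rm par}(Q_*,F_*)$ against the dimensions of the induced flag strata and checking positivity case by case; it is the entire content of \cite[Proposition~2.7]{UBIB2} in the critical range, so ``granting this'' is granting the lemma in its only non-classical case. Two smaller points: the parabolic dual of a bundle in $U^s_\alpha$ lives in a different moduli space (reversed weights, dual flag types, determinant $L^{-1}$ twisted by the parabolic divisor), so the duality step should either note that the dimension count is symmetric under this identification or simply be replaced by running the same estimate directly for the non-$(1,0)$-stable locus (the destabilizing window again has length less than $1$); and in your openness argument the relevant parameter space is a relative parabolic Quot/flag scheme, whose properness over $U_\alpha^s$ should be stated rather than assumed.
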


\begin{proof}
See \cite[Proposition 2.7]{UBIB2}. Note the Assumption \ref{ams1}(2) ensures that
Proposition 2.7 of \cite{UBIB2} applies in our context.
\end{proof}
	
\begin{lemma}\label{inj}
The map $\Psi_{E_*,y}$ in \eqref{psi} is injective. 
\end{lemma}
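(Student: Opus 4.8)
The plan is to show that $\Psi_{E_*,y}$ separates points by exploiting the defining exact sequence \eqref{family}. Suppose $\sigma,\,\sigma'\,\in\,\mathbb P(E_y^*)$ are two distinct lines in $E_y^*$ with $\Psi_{E_*,y}(\sigma)\,=\,\Psi_{E_*,y}(\sigma')$; equivalently, the two kernel parabolic bundles $(E_\sigma)_*$ and $(E_{\sigma'})_*$ are isomorphic as parabolic vector bundles on $X$. Write $\psi\,:\,E_\sigma\,\xrightarrow{\,\sim\,}\, E_{\sigma'}$ for such an isomorphism. Both $E_\sigma$ and $E_{\sigma'}$ sit inside $E$ as the kernels of $\sigma$ and $\sigma'$ respectively, so composing with the inclusion $E_{\sigma'}\,\hookrightarrow\, E$ gives a homomorphism $E_\sigma\,\longrightarrow\, E$; I would like to extend this to an endomorphism of $E$ itself.

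The key step is the extension: since $E_\sigma\,\hookrightarrow\, E$ has torsion quotient $\mathbb C_y$ supported at the single point $y\,\notin\, D$, and $E$ is locally free, a homomorphism $E_\sigma\,\to\, E$ extends to a homomorphism $E\,\to\, E(y)$ over all of $X$ (this is a local statement at $y$, where one divides by the local uniformizer at most once). Thus $\psi$ composed with $E_{\sigma'}\,\hookrightarrow\, E$ gives a map $E\,\to\, E(y)$. Now I invoke stability of $E_*$: twisting by $\mathcal{O}_X(y)$ raises the parabolic degree by $n$, so $\mathrm{par}\deg(E(y)_*)/n\,=\,\mathrm{par}\deg(E_*)/n + 1$, and a nonzero parabolic homomorphism $E_*\,\to\, E(y)_*$ between parabolic bundles of the same rank and slopes differing by $1$ must be... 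Actually the cleaner route: the composite $E_\sigma\,\xrightarrow{\psi}\, E_{\sigma'}\,\hookrightarrow\, E$ followed by the original inclusion $E_\sigma\,\hookrightarrow\, E$ being \emph{two} maps $E_\sigma\,\to\, E$ — I consider instead the automorphism angle. The inclusions $\iota\,:\,E_\sigma\,\hookrightarrow\, E$ and $\iota'\,:\,E_{\sigma'}\,\hookrightarrow\, E$ together with $\psi$ yield $\iota'\circ\psi\,:\,E_\sigma\,\to\, E$. Extending over $y$ produces $\widetilde\psi\,:\,E\,\to\, E(y)$, which can also be viewed, after the canonical identification away from $y$, as an element of $\mathrm{Hom}(E_*,E(y)_*)$ which is an isomorphism generically.

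To finish I would argue that $\widetilde\psi\,:\,E_*\,\to\, E(y)_*$, being a generically invertible parabolic homomorphism between stable parabolic bundles of the same rank, must be the canonical inclusion $E\,\hookrightarrow\, E(y)$ up to a nonzero scalar: indeed its determinant is a nonzero section of $L(ny)\otimes L^{-1}\,=\,\mathcal{O}_X(ny)$ vanishing only on a divisor of degree $n$, and parabolic stability forces $\widetilde\psi$ to be injective as a bundle map off a codimension-one locus; comparing with the structure of the two elementary modifications pins down that $\widetilde\psi$ must carry $E_\sigma$ isomorphically onto $E_\sigma$, i.e. $\ker\sigma\,=\,\ker\sigma'$ as subsheaves, hence $\sigma$ and $\sigma'$ define the same line in $E_y^*$, a contradiction. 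The main obstacle I anticipate is making the last comparison fully rigorous in the parabolic setting — ensuring that the induced parabolic structures on the elementary transformations behave well under $\widetilde\psi$ and that stability is genuinely being used (not merely simplicity), since the points $x_i$ of $D$ are disjoint from $y$ but one still must track parabolic weights through the identifications. An alternative, possibly shorter, approach is to recover $\sigma$ directly from $(E_\sigma)_*$ as the unique (up to scalar) parabolic map $(E_\sigma)_*\,\to\,\mathbb C_y$ that does not extend to a map $E_*'\,\to\,\mathbb C_y$ for any nontrivial locally free extension $E_*'$, but this requires knowing $E$ is reconstructed canonically from $E_\sigma$, which again reduces to the stability argument above.
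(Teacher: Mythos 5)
Your opening reduction is correct and matches the paper's: $\Psi_{E_*,y}(\sigma)=\Psi_{E_*,y}(\sigma')$ means the two kernels $F_*=\ker\sigma$ and $\ker\sigma'$ are isomorphic as parabolic bundles, and injectivity follows once one knows that $\dim H^0(Hom(F_*,E_*))=1$, since then the inclusion $\iota:\ker\sigma\hookrightarrow E$ and the composite $\iota'\circ\psi$ are proportional, so $\ker\sigma=\ker\sigma'$ as subsheaves of $E$ and $\sigma,\sigma'$ define the same point of $\mathbb P(E_y^*)$. The gap is in how you propose to establish this uniqueness. Extending $\iota'\circ\psi$ to $\widetilde\psi:E\to E(y)$ and invoking stability cannot work as stated: the parabolic slope of $E(y)_*$ \emph{exceeds} that of $E_*$ by $1$, and stability only controls homomorphisms from larger to smaller (or equal) slope, so it places no upper bound whatsoever on $\dim\mathrm{Hom}(E_*,E(y)_*)$; this space is not one-dimensional in general. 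Likewise, "its determinant is a nonzero section of $\mathcal O_X(ny)$ vanishing on a divisor of degree $n$" is true of \emph{any} nonzero section of $\mathcal O_X(ny)$ and does not locate the zeros at $y$, while "comparing with the structure of the two elementary modifications pins down that $\widetilde\psi$ must carry $E_\sigma$ isomorphically onto $E_\sigma$" is exactly the statement to be proved, not an argument. Your own closing caveat correctly identifies that this is where the proof is missing.

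The two concrete steps the paper uses to close this gap are worth recording. First, every nonzero homomorphism $F_*\to E_*$ has maximal rank $n$: otherwise its image $G\subsetneq E$ is a proper subsheaf which, being (generically) a quotient of the stable bundle $F_*$, satisfies $\mathrm{par}\deg G_*/\mathrm{rk}\,G>\mathrm{par}\deg F_*/\mathrm{rk}\,F=(\mathrm{par}\deg E_*-1)/n$, contradicting the $(0,1)$-stability of $E_*$ (this is where $(0,1)$-stability, not mere stability, enters). Hence $\wedge^n$ of any nonzero homomorphism is a nonzero section of $\det E\otimes(\det F)^{-1}\cong\mathcal O_X(y)$, which for $g\geq 1$ vanishes only at $y$; so every nonzero homomorphism $F\to E$ is an isomorphism away from $y$. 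Second, if $\dim\mathrm{Hom}(F_*,E_*)\geq 2$, pick $x\neq y$ and two independent homomorphisms $\sigma,\tau$; some combination $a\sigma+b\tau$ is singular on the fibre at $x$ (a nontrivial pencil of $n\times n$ matrices always meets the determinant hypersurface), contradicting the first step. Without this pincer, your argument does not close; with it, the extension to $E\to E(y)$ becomes unnecessary.
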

	
\begin{proof}
It suffices to show that $h^0(Hom(F_*,\,E_*))\,:=\, \dim H^0(Hom(F_*,\,E_*))\,=\,1$. Firstly, note that any homomorphism $F\,\xrightarrow{\,\,\,\sigma\,\,\,}
\,E$ is an isomorphism away from $y$ and is of maximal rank $n$. For otherwise, we can factorize $\sigma$ as
\begin{center}
\begin{tikzcd}
F_*\ar[r] & G'\ar[r]\ar[d] & 0\\
E_* & G\ar[l] & 0\ar[l]
\end{tikzcd}
\end{center} 
where $G'\,=\,F/\ker \sigma$ and $G\,=\,Im(\sigma)$. Then ${\rm par}\deg G_*
\,\geq \,{\rm par}\deg F_*$ and ${\rm rk}\ G_*\,=\,{\rm rk}\ F_*$ and we get the following inequality: 
$$\frac{{\rm par}\deg G_*}{{\rm rk}\ G_*} \,\geq\, \frac{{\rm par}\deg
G'_*}{{\rm rk}\ G'_*}\,>\, \frac{{\rm par}\deg F_*}{{\rm rk}\ F_*}\,=\, \frac{{\rm par}\deg E_*-1}{{\rm rk}\ E_*},$$
which contradicts the $(0,1)$-stability condition of $E_*$.

Fix $x\,\neq\, y$. Suppose two linearly independent homomorphisms $\sigma$ and $\tau$ exist. Since they are of maximal rank, we can find $a,\,
b\,\in \,\mathbb C$ such that $\mu\,:=\,a\sigma+b\tau$ induces a singular element in $Hom(F_x,\,E_x)$. But the only zero of the homomorphism $$\wedge^n\mu\, :\, \det F\,\cong\, \det E \otimes \mathcal O(-y)
\,\longrightarrow\, \det E$$ is $y$, which is a contradiction. This completes the proof.
\end{proof}

\begin{remark}\label{positive}
As a consequence of Lemma \ref{inj}, we have $\Psi_{E_*,y}^*
\widehat{\mathcal L}\,=\,\mathcal O_{\mathbb P(E_y^*)}(\beta)$ for
some $\beta\,>\,0$.
\end{remark}

\subsection{Main theorem}

\begin{theorem}\label{main}
Let $X$ be a non-singular complex projective curve of genus $g\,\geq\,2$. Fix integers $r,\,n,\,d$
such that $r\,\geq\, 1$ and $n\,\geq\, 2$. Also, fix $r$ distinct points $x_1,\,\cdots,\, x_r$ on $X$ and a line bundle $L$ on $X$ of degree $d$.
Assume that the two conditions in Assumption \ref{ams1} hold.
Let $U_{\alpha}(L)$ be as in \eqref{d-1} and $\mathcal{W}_L$ denote the parabolic Picard sheaf (see \eqref{k1}). Then $\mathcal{W}_L$ is stable with
respect to the polarization $\widehat{\mathcal L}$.
\end{theorem}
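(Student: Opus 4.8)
The plan is to carry over to the parabolic setting the Hecke-curve technique by which stability of the classical Picard bundle is proved in \cite{IBTG, BBN}; the geometry this requires is already in place in the family $\mathcal{F}$ over $\mathbb{P}(E_y^*)$ of \eqref{family} and the morphisms $\Psi_{E_*,y}$ of \eqref{psi}. First observe that $\mathcal{W}_L$ (see \eqref{k1}) is torsion-free, being the direct image of a family flat over $U_{\alpha}(L)$, so that its stability is a meaningful assertion; one may assume $\mathcal{W}_L\,\neq\, 0$ and $\operatorname{rk}\mathcal{W}_L\,\geq\, 2$, the rank-one case being vacuous. Two parabolic-specific points make the cited method available. \emph{(a) The Hecke loci cover.} By Lemma \ref{open}, as $(E_*, y)$ ranges over a dense open subset of the moduli of pairs with $E_*$ a $(0,1)$-stable parabolic bundle of determinant $L(y)$ and $y\,\in\, X\setminus D$, the images $\Psi_{E_*,y}(\mathbb{P}(E_y^*))\,\subset\, U_{\alpha}(L)$ sweep out a dense open subset. \emph{(b) The flag line bundles are invisible on Hecke curves.} Since the elementary transformation \eqref{ET} alters $E$ only at $y\,\notin\, D$, the quasi-parabolic flags of $\mathcal{F}$ over $D\times\mathbb{P}(E_y^*)$ are pulled back from $D$; hence the flag line bundles entering the determinant polarization $\widehat{\mathcal{L}}$ of \cite[Theorem 1.2]{XS} restrict trivially along $\operatorname{Id}_X\times\Psi_{E_*,y}$, while $\Psi_{E_*,y}^*\widehat{\mathcal{L}}\,=\,\mathcal{O}_{\mathbb{P}(E_y^*)}(\beta)$ with $\beta\,>\,0$ by Remark \ref{positive}. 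As in the non-parabolic situation, (a) and (b) reduce the statement to understanding the restriction of $\mathcal{W}_L$ to the rational curves $C\,=\,\Psi_{E_*,y}(\ell)$, where $\ell\,\subset\,\mathbb{P}(E_y^*)$ is a general line.

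The decisive computation is that of $\Psi_{E_*,y}^*\mathcal{W}_L$. Applying $\pi_{2*}$ to \eqref{family} and using \eqref{FandU} together with cohomology and base change (legitimate over the open locus where $h^0(X,E)$ and $h^1(X,E)$ take their generic values), one identifies $\mathcal{W}'\,:=\,\Psi_{E_*,y}^*\mathcal{W}_L\otimes\mathcal{O}_{\mathbb{P}(E_y^*)}(-j)$ with the kernel of the evaluation morphism $H^0(X,E)\otimes\mathcal{O}\,\longrightarrow\,\mathcal{O}_{\mathbb{P}(E_y^*)}(1)$, $s\,\mapsto\, s(y)$. Comparison with the Euler sequence on $\mathbb{P}(E_y^*)\,\cong\,\mathbb{P}^{n-1}$ then exhibits $\mathcal{W}'$ as an extension
$$0\,\longrightarrow\, H^0(X,E(-y))\otimes\mathcal{O}\,\longrightarrow\,\mathcal{W}'\,\longrightarrow\,\Omega_{\mathbb{P}(E_y^*)}(1)\,\longrightarrow\, 0 ,$$
from which, restricting to a general line $\ell$ (where $\Omega_{\mathbb{P}^{n-1}}(1)|_{\ell}\,\cong\,\mathcal{O}(-1)\oplus\mathcal{O}^{\oplus(n-2)}$),
$$\mathcal{W}_L|_{C}\ \cong\ \mathcal{O}_C(a)^{\oplus(\operatorname{rk}\mathcal{W}_L-1)}\,\oplus\,\mathcal{O}_C(a-1)$$
for a suitable integer $a$: the splitting type of $\mathcal{W}_L$ along a Hecke curve is as balanced as it can be. Note also, for later use, that the maximal-slope (Harder--Narasimhan) subsheaf of $\mathcal{W}_L$ on the full Hecke locus $\Psi_{E_*,y}(\mathbb{P}(E_y^*))$ is, by the displayed extension, the restriction of the sub-Picard sheaf $\pi_{2*}\!\big(\mathcal{U}_L\otimes\pi_1^*\mathcal{O}_X(-y)\big)\subset\mathcal{W}_L$ of sections vanishing at $y$.

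The conclusion then follows the familiar pattern, now to be checked in the parabolic bookkeeping. For a proper saturated subsheaf $\mathcal{S}\,\subsetneq\,\mathcal{W}_L$ of rank $s$, a general Hecke curve $C$ avoids the loci where $\mathcal{S}\subset\mathcal{W}_L$ fails to be a subbundle inclusion, so $\mathcal{S}|_C$ is a rank-$s$ subsheaf of the nearly balanced bundle above, whence $\deg(\mathcal{S}|_C)\,\leq\, sa$; unravelling this in terms of $\widehat{\mathcal{L}}$ (keeping track of the flag line bundle contributions, which vanish on $C$) yields $\mu_{\widehat{\mathcal{L}}}(\mathcal{S})\,\leq\,\mu_{\widehat{\mathcal{L}}}(\mathcal{W}_L)$, i.e. $\mathcal{W}_L$ is semistable. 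To upgrade to stability one excludes equality: if $\mu_{\widehat{\mathcal{L}}}(\mathcal{S})\,=\,\mu_{\widehat{\mathcal{L}}}(\mathcal{W}_L)$ with $\mathcal{S}$ saturated, then $\deg(\mathcal{S}|_C)$ must be the maximal value $sa$ for a general $C$, forcing $\mathcal{S}|_C$ into the slope-$a$ summand $\mathcal{O}_C(a)^{\oplus(\operatorname{rk}\mathcal{W}_L-1)}$; intersecting the corresponding conditions over the lines through a general point of a Hecke locus $\Psi_{E_*,y}(\mathbb{P}(E_y^*))$ shows that $\mathcal{S}$ is contained there in $\pi_{2*}\!\big(\mathcal{U}_L\otimes\pi_1^*\mathcal{O}_X(-y)\big)$, and letting $y$ range over the dense set of (b) forces $\mathcal{S}$ to lie in the subsheaf of sections vanishing at every point of $X$, which is $0$ --- contradicting $\operatorname{rk}\mathcal{S}\,>\,0$. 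Hence no subsheaf with $\mu_{\widehat{\mathcal{L}}}(\mathcal{S})\,\geq\,\mu_{\widehat{\mathcal{L}}}(\mathcal{W}_L)$ exists, and $\mathcal{W}_L$ is stable.

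The step I expect to be the main obstacle is the last one: making rigorous the globalization in the equality case --- that a saturated subsheaf of maximal slope must, curve by curve, be carried into the ``vanishing at $y$'' sub-Picard sheaf, which requires careful control of the conormal/trivial summand of $\Omega_{\mathbb{P}^{n-1}}(1)|_{\ell}$ as $\ell$ varies --- together with the accompanying parabolic bookkeeping, whereby the weights enter the $\widehat{\mathcal{L}}$-degrees only through the determinant polarization of \cite[Theorem 1.2]{XS} and the flag line bundles drop out on Hecke curves. The reduction to Hecke curves, the computation of $\Psi_{E_*,y}^*\mathcal{W}_L$, and the semistability estimate should be essentially routine transcriptions of the arguments of \cite{IBTG, BBN}.
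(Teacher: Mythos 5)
Your overall geometric setup coincides with the paper's: both arguments hinge on the Hecke family \eqref{family}, the embeddings $\Psi_{E_*,y}$ of \eqref{psi}, and the identification (via the Euler sequence) of $\Psi_{E_*,y}^*\mathcal W_L(-j)$ as assembled from $H^0(E_*(-y))\otimes\mathcal O$ and $\Omega_{\mathbb P(E_y^*)}(1)$, so that it has total degree $-1$ on $\mathbb P(E_y^*)$. But your endgame diverges, and the divergence is where the gap lies. Restricting to a general line $\ell$, a rank-$s$ subsheaf of $\mathcal O_\ell(a)^{\oplus(\rho-1)}\oplus\mathcal O_\ell(a-1)$ (with $\rho\,=\,\operatorname{rk}\mathcal W_L$) only satisfies $\deg\,\leq\, sa$, i.e.\ slope $\leq a$, whereas $\mu(\mathcal W_L|_\ell)\,=\,a-1/\rho\,<\,a$; so the ``semistability estimate'' you call routine does not follow from the splitting type at all, and the entire burden falls on the step you yourself flag as the main obstacle --- showing that a destabilizing $\mathcal S$ would have to land inside the slope-$a$ part, i.e.\ inside the sections vanishing at $y$, for every $y$. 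That step is essentially the whole theorem, and your sketch of it (Harder--Narasimhan subsheaf on the Hecke locus, intersecting conditions over lines, letting $y$ vary) is a plan rather than an argument.

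The paper closes exactly this gap with a concrete device that is absent from your proposal. Given a saturated $\mathcal G\,\subsetneq\,\mathcal W_L$, it fixes $k\,>\,\mu(F_*)$ points $y_1,\dots,y_k\,\in\, X\setminus D$ and a general $F_*$ as in Lemma \ref{generic}; a nonzero $v\,\in\,\mathcal G_{F_*}$ gives a nonzero section $s\,\in\, H^0(X,F_*)$, and stability of $F_*(-y_1-\cdots-y_k)$ forces $s(y)\,\neq\,0$ for some $y\,=\,y_i$. Choosing the line $l\,\subset\, F_y$ with $s(y)\,\notin\, l$ then guarantees that the induced map $\mathcal G'\,\longrightarrow\,\Omega_{\mathbb P(E_y^*)}(1)$ is nonzero, whence $\deg\mathcal G'\,\leq\,-1$ by stability of $\Omega_{\mathbb P(E_y^*)}(1)$ (the kernel lies in a trivial sheaf and contributes $\leq 0$). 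Since $\deg\Psi_{E_*,y}^*\mathcal W_L(-j)\,=\,-1$ exactly, the strict inequality $-1/\operatorname{rk}\mathcal G\,<\,-1/\operatorname{rk}\mathcal W_L$ is automatic from $\operatorname{rk}\mathcal G\,<\,\operatorname{rk}\mathcal W_L$: there is no separate semistability step and no equality case to exclude. To repair your version you would need precisely this mechanism --- a point $y$ where a section coming from $\mathcal S$ does not vanish, and a Hecke curve through $F_*$ on which $\mathcal S$ maps nontrivially onto the $\mathcal O(a-1)$ factor. Note also that your exact sequence $0\to H^0(E(-y))\otimes\mathcal O\to\mathcal W'\to\Omega_{\mathbb P(E_y^*)}(1)\to0$ tacitly assumes the vanishing of $H^1(E_*(-y))$, which the paper avoids by retaining that term in its diagram and using only the resulting first Chern class.
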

 
We first make some observations and prove a lemma required for the proof of Theorem \ref{main}.
 
\noindent For any $y\,\notin\, D$, consider the projective bundle 
\begin{equation*}
    P_y\,:=\,{\mathbb P}(\mathcal U_L|_{\{y\}\times U_{\alpha}(L)})
\end{equation*}
(see \eqref{d-2}) over $\{y\}\times U_{\alpha}(L)$.
The elements of $P_y$ are of the form $(F_*,\,l)$ with $F_*\,\in\, U_{\alpha}(L)$ and $l\,\in\, \mathbb P(F_y)$. An
element $(F_*,\,l)$ of $P_y$ also corresponds to a short exact sequence \eqref{ET}.
Let
$$H_y\,:=\,\{(F_*,\,l)~\vert \ {\rm the\ bundle\ } E_*\ \mbox{defined in\ \eqref{ET} is\ (0,1)-stable}\}.$$
Then there are morphisms 
\begin{equation}\label{pq}
p\,:\,H_y\,\longrightarrow\, U_{\alpha}(L) {\rm ~and~} q\,:\,H_y\,\longrightarrow\, U_{\alpha}(L(y))
\end{equation}
defined by $(F_*,\,l)\,\longmapsto\, F_*$ and $(F_*,\,l)\,\longmapsto\, E_*$, respectively. Let $W$ denote the open subset (by Lemma \ref{open})
\begin{equation}\label{W}
   W =\{E_*\in U_{\alpha}(L(y))\vert\ E_*\ \mbox{is\ (0,1)-stable}\}.
\end{equation}
Then the image of $q$ is $W$ and the
fiber of $q$ over $E_*\,\in\, W$ can be identified with ${\mathbb P}(E_y^*)$. Note that the restriction of $p$ to $\mathbb P(E_y^*)$
is precisely the map $\Psi_{E_*,y}$. By Lemma \ref{inj}, $\Psi_{E_*,y}$ maps isomorphically onto its image, say 
\begin{equation}\label{PEy}
    P(E_*,y)\,:=\,
\Psi_{E_*,y}(\mathbb P(E_y^*)).
\end{equation}

\begin{lemma}\label{generic}
Let $\mathcal G$ be a subsheaf of $\mathcal W_L$ with $0\,<\,{\rm rk\ }\mathcal G\,<\,{\rm rk\ }\mathcal W_L$.
For $k\,\in\,\mathbb N$, let $y_1,\,\cdots,\,y_k\,\in\, X\setminus D$. There is a non-empty open subset $V$ of $U_{\alpha}(L)$ such that
for all $F_*\,\in\, V$ the following hold:
\begin{enumerate}
\item $\mathcal G$ is locally free at $F_*$;

\item the homomorphism of fibers $\mathcal G_{F_*}\,\longrightarrow\, (\mathcal W_L)_{F_*}$ is injective;

\item for all $y_i$ and a generic line $l$ in $F_{y_i}$, the parabolic bundle $E_*$ associated to $(F_*,\,l)$ is $(0,1)$-stable
and $\mathcal G$ is locally free at every point of $P(E_*,\,y_i)$ (see \eqref{PEy}) outside a subvariety of codimension at least 2.
\end{enumerate}
\end{lemma}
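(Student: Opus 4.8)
The plan is to prove Lemma \ref{generic} by treating the three conditions essentially one at a time, exploiting that each cuts out (or is complementary to) a constructible set and that $U_{\alpha}(L)$ is irreducible, so a finite intersection of non-empty open sets is again non-empty open. First I would handle (1): the non-locally-free locus of a coherent sheaf on a smooth variety is a proper closed subset, so since $0 < {\rm rk}\,\mathcal G < {\rm rk}\,\mathcal W_L$ forces $\mathcal G$ to be a genuine subsheaf (not all of $\mathcal W_L$, and not zero), its singularity locus $\mathrm{Sing}(\mathcal G)$ is a proper closed subvariety; let $V_1$ be its complement. Next, for (2), over $V_1$ both $\mathcal G$ and $\mathcal W_L$ are locally free and the inclusion $\mathcal G \hookrightarrow \mathcal W_L$ is a map of vector bundles; the locus where the induced map on fibers drops rank (equivalently, where the cokernel fails to be locally free) is again closed, and it is proper because $\mathcal G \hookrightarrow \mathcal W_L$ is generically an injection of bundles of the stated ranks (the quotient is generically torsion-free since $U_\alpha(L)$ is integral). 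Call its complement $V_2 \subseteq V_1$.

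The substantive part is condition (3). For a fixed $y_i$, recall from \eqref{pq}--\eqref{W} the diagram $U_{\alpha}(L) \xleftarrow{p} H_{y_i} \xrightarrow{q} U_{\alpha}(L(y_i))$, where $H_{y_i} \subseteq P_{y_i}$ is open, $q$ is surjective onto the open set $W$ of $(0,1)$-stable bundles, and the fiber $q^{-1}(E_*) \cong \mathbb P(E_{y_i}^*)$ maps by $p$ isomorphically onto $P(E_*,y_i)$. I would first note that the condition ``$E_*$ associated to $(F_*,l)$ is $(0,1)$-stable'' simply says $(F_*,l) \in H_{y_i}$, and since $H_{y_i}$ is open and dense in $P_{y_i}$ (its image under $q$ is the dense open $W$, and the map $P_{y_i}\to U_\alpha(L(y_i))$ induced by the elementary-transformation construction is dominant, so $H_{y_i}$ meets the generic fiber of $P_{y_i}\to\{y_i\}\times U_\alpha(L)$ in a dense open subset), there is a non-empty open $V_{i}' \subseteq U_{\alpha}(L)$ over which the generic line $l \in F_{y_i}$ lands in $H_{y_i}$. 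It remains to arrange, shrinking $V_i'$ further, that $\mathrm{Sing}(\mathcal G) \cap P(E_*,y_i)$ has codimension $\geq 2$ in $P(E_*,y_i)$ for generic such $(F_*,l)$. Here is the key point: $\mathrm{Sing}(\mathcal G)$ is a fixed proper closed subvariety $Z$ of $U_{\alpha}(L)$; its preimage $p^{-1}(Z) \cap H_{y_i}$ is closed in $H_{y_i}$, of codimension $\geq 1$ there; hence for the generic fiber of the composite $H_{y_i} \xrightarrow{q} W$ followed by — no, rather, I should fiber over $U_\alpha(L)$ is wrong — let me instead fiber over $W$: for $E_* \in W$ the fiber $q^{-1}(E_*)$ is irreducible of dimension $n-1$, and $p^{-1}(Z)\cap q^{-1}(E_*) = \Psi_{E_*,y_i}^{-1}(Z\cap P(E_*,y_i))$. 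By generic flatness / upper-semicontinuity of fiber dimension applied to the closed set $p^{-1}(Z)\cap H_{y_i} \to W$, together with the fact that $Z$ has codimension $\geq 1$ in the image of $p$, the fibers of $p^{-1}(Z)\cap H_{y_i}\to W$ have dimension $\leq n-2$ over a dense open $W_i \subseteq W$ — unless some component of $Z$ is dominated, in which case I would invoke $\dim Z < \dim U_\alpha(L)$ and a dimension count on $H_{y_i}$ (whose dimension exceeds $\dim U_\alpha(L)$ by exactly $n-1$, matched by the relative dimension of $q$) to conclude $p^{-1}(Z)\cap H_{y_i}$ has codimension $\geq 1$ and meets the generic $q$-fiber in codimension $\geq 1$, i.e. in dimension $\leq n-2$. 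Since $P(E_*,y_i)\cong\mathbb P(E_{y_i}^*)$ has dimension $n-1$, this is exactly codimension $\geq 2$ inside $P(E_*,y_i)$. Finally $q(p^{-1}(V_i')\cap H_{y_i})$ is a constructible dense subset of $W$, so $W_i' := W_i \cap q(\cdots)$ contains a dense open of $W$, and I pull this back along the (dominant, generically finite onto image) correspondence to get a non-empty open $V_i \subseteq U_\alpha(L)$ of $F_*$ for which the generic line $l$ gives an $E_*$ in both $W$ and $W_i$, so (3) holds at $y_i$. Then set $V := V_2 \cap \bigcap_{i=1}^k V_i$, which is non-empty open since $U_{\alpha}(L)$ is irreducible.

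The main obstacle I anticipate is the bookkeeping in step (3): one has the correspondence $p,q$ between two different moduli spaces with fibers of differing natures (a line bundle's worth on one side via \eqref{FandU}, a $\mathbb P^{n-1}$ on the other), and one must be careful that ``generic line $l$ in $F_{y_i}$'' is quantified correctly — it is a statement about the generic point of each fiber of $p$ over $U_\alpha(L)$, not of $q$ over $W$ — so the argument genuinely has to pass through $H_{y_i}$ viewed simultaneously over both bases, and the dimension estimate ``$Z$ meets $P(E_*,y_i)$ in codimension $\geq 2$'' must be extracted from ``$Z$ has codimension $\geq 1$ in $U_\alpha(L)$'' via the fact that the subvarieties $P(E_*,y_i)$ sweep out (a dense subset of) $U_\alpha(L)$ as $E_*$ varies, which is precisely the geometric content of the surjectivity of $p$ restricted to $H_{y_i}$. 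Everything else is standard semicontinuity and the irreducibility of $U_{\alpha}(L)$.
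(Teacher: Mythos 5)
Your overall strategy for part (3) is the same as the paper's: pass to the correspondence $H_{y_i}$ with the two projections $p,q$ of \eqref{pq}, take the preimage $p^{-1}(S)$ of the singular locus $S$ of $\mathcal G$, and bound the dimension of its intersection with the fibres $q^{-1}(E_*)\cong\mathbb P(E_{y_i}^*)$ by a fibre-dimension count over $W$. But there is a genuine arithmetic gap at the decisive step. You only ever invoke that $S=\mathrm{Sing}(\mathcal G)$ is a \emph{proper} closed subset, i.e.\ of codimension $\geq 1$ in $U_\alpha(L)$. Your dimension count then yields, for general $E_*$,
$$\dim\bigl(P(E_*,y_i)\cap S\bigr)\ \leq\ \dim S + (n-1) - \dim U_\alpha(L)\ \leq\ n-2,$$
and you assert that dimension $\leq n-2$ inside the $(n-1)$-dimensional variety $P(E_*,y_i)$ is ``exactly codimension $\geq 2$.'' It is not: it is only codimension $\geq 1$, which is not enough for the use made of the lemma in the proof of Theorem \ref{main} (there one needs $\deg \Psi_{E_*,y}^*\mathcal G(-j)=\deg\mathcal G'$, which requires the two sheaves to agree outside codimension $2$, so that first Chern classes match).

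The missing ingredient is that $\mathcal G$ is a subsheaf of the torsion-free sheaf $\mathcal W_L$ on the smooth (hence normal) variety $U_\alpha(L)$, so $\mathcal G$ is itself torsion-free and therefore locally free outside a closed subset of codimension $\geq 2$; this is exactly the hypothesis $\dim U_\alpha(L)-\dim S\geq 2$ used in the paper. With that input the same count gives $\dim(P(E_*,y_i)\cap S)\leq \dim S+(n-1)-\dim U_\alpha(L)\leq n-3$, which is the required codimension $\geq 2$ in $\mathbb P(E_{y_i}^*)$. Aside from this, your treatment of (1) and (2) as open conditions matches the paper, your case split according to whether $q(p^{-1}(S)\cap H_{y_i})$ is dense in $W$ mirrors the paper's, and your concern about quantifying ``generic line $l$ in $F_{y_i}$'' versus ``general $E_*\in W$'' is a legitimate point that the paper itself passes over lightly; but as written the codimension claim in (3) is not proved.
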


\begin{proof}
The first two conditions are in fact open conditions and hence define an open subset of the moduli space.

Proof of (3): Let $S$ denote the singular set of $\mathcal G$. We show that for a fixed $y\,\notin\, D$ and general $E_*\,\in\, U_{\alpha}(L(y))$, either
$P(E_*, \,y)$ is empty or $\dim P(E_*,\,y)\cap S\,\leq\, n-3$. It suffices to consider the case when $S$ is irreducible. Let $S'
\,=\,p^{-1}(S)$ be the inverse image of $S$ in $H_y$. The fiber of $q\vert_{S'}$ over $E_*$ (see \eqref{pq}) can be identified with $P(E_*,y)\cap S$. Now $S'$ is
either empty or an irreducible variety of dimension $\dim S+n-1$ where $\dim U_{\alpha}(L)-\dim S\,\geq 2$. If $q(S')$ is not dense in $W$ (see \eqref{W}), then
for general $E_*$, we have $P(E_*,y)\cap S\,=\,\emptyset$. If $q(S')$ is dense in $W$, then the general fiber of $q\vert_{S'}$ has dimension, say $\delta$,
\begin{align*}
\delta &= \dim S'-\dim W\\
&=\, \dim S + n-1 -\dim U_{\alpha}(L(y))\\
&=\, \dim S+ n-1-\dim U_{\alpha}(L)\\
& \leq\, n-3.
\end{align*}
This completes the proof.
\end{proof}
	
\begin{proof}[{Proof of Theorem \ref{main}}]
Let $\mathcal G$ be a (saturated) subsheaf of $\mathcal W_L$ such that $0\,<\,{\rm rk}(\mathcal G)
\,<\,{\rm rk}(\mathcal W_L)$. For $k\,>\,\mu(F_*)$, fix points $y_1,\,\cdots,\,y_k\,\in\, X\setminus D$.
Choose $F_*\,\in\, U_{\alpha}(L)$ such that $F_*$ satisfies the conditions of Lemma \ref{generic}. Then by Lemma \ref{generic}(2), for a fixed
$0\,\neq\, v\,\in\, \mathcal G_{F_*}$, its image $s$ in $(\mathcal W_L)_{F_*}\,=\,H^0(X,\,F_*)$ is non-zero. Observe that if $s(y_i)\,=\,0$ for all
$i$, then $s\,\in\, H^0(X,\, F_*(-y_1-\dots -y_k))$ and hence it induces a homomorphism $\mathcal O_X\,\longrightarrow\, F_*(-y_1-\dots -y_k)$. Since
$F_*$ is stable, we have $F_*(-y_1-\dots -y_k)$ is stable and hence $H^0(X,\, F_*(-y_1-\dots -y_k))\,=\,0$, a contradiction to our choice of $s$.
Therefore there is an $i$ such that $s(y_i)\,\neq\, 0$; let $y\,:=\,y_i$.

Now for the chosen $F_*$ and $y$, using Lemma \ref{generic}(3), we can also choose a line $l\,\subset\, F_y$ such that $s(y)
\,\notin\, l$ and $\mathcal G$ is 
locally free on $P(E_*,\, y)$ except for some subvariety of codimension 2, where $E_*$ is the
vector bundle associated to $(F_*,\,l)$. We can reconstruct $(F_*,\,l)$ 
from $E_*$ using the following diagram:
\begin{center}
\begin{eqnarray}\label{D1}
\begin{tikzcd}
& 0\arrow{d} & 0\arrow{d} & & \\
& E_*(-y)\arrow{d}\arrow[equal]{r}& E_*(-y)\arrow{d}& & \\
0\arrow{r}& F_*\arrow{r}\arrow{d} & E_*\arrow{r}{\sigma}\arrow{d} & \mathbb C_y\arrow[equal]{d}\arrow{r} & 0\\
0\arrow{r} & F_y/l\arrow{r}\arrow{d}& E _y\arrow{r}\arrow{d} & \mathbb C_y\arrow{r}& 0\\
& 0 & 0 & & 
\end{tikzcd}
\end{eqnarray}
\end{center}
Varying $\sigma\,\in\,\mathbb P(E_y^*)$, we obtain a family of parabolic bundles $\mathcal F$ as in \eqref{family} that fits into the
following diagram:
\begin{center}
\begin{eqnarray}\label{D2}
\begin{tikzcd}
& 0\arrow{d} & 0\arrow{d} & & \\
& \pi_1^*E_*(-y)\arrow{d}\arrow[equal]{r}& \pi_1^*E_*(-y)\arrow{d}& & \\
0\arrow{r}& \mathcal F\arrow{r}\arrow{d} & \pi_1^*E_*\arrow{r}\arrow{d} & \mathcal{O}_{\{y\}\times \mathbb P(E_y^*)}(1)\arrow[equal]{d}\arrow{r} & 0\\
0\arrow{r} & \Omega_{\{y\}\times \mathbb P(E_y^*)}(1)\arrow{r}\arrow{d}& E _y\otimes \mathcal{O}_{\{y\}\times \mathbb P(E_y^*)}\arrow{r}\arrow{d} & \mathcal{O}_{\{y\}\times \mathbb P(E_y^*)}(1)\arrow{r}& 0\\
& 0 & 0 & & 
\end{tikzcd}
\end{eqnarray}
\end{center}
Taking the direct image of the above diagram by $\pi_2$ and using \eqref{FandU}, we obtain the following diagram on $\mathbb P(E_y^*)$:
\begin{center}
\begin{eqnarray}
\begin{tikzcd}
& 0\arrow{d} & 0\arrow{d} & & \\
& H^0(E_*(-y))\otimes\mathcal O_{\mathbb P(E_y^*)}\arrow{d}\arrow[equal]{r}& H^0(E_*(-y))\otimes\mathcal O_{\mathbb P(E_y^*)}\arrow{d}& & \\
0\arrow{r}& \Psi_{E_*,y}^*\mathcal W_L(-j)\arrow{r}\arrow{d} & H^0(E_*)\otimes \mathcal O_{\mathbb P(E_y^*)}\arrow{r}\arrow{d} & \mathcal{O}_{\{y\}\times \mathbb P(E_y^*)}(1)\arrow[equal]{d}\arrow{r} & 0\\
0\arrow{r} & \Omega_{\mathbb P(E_y^*)}(1)\arrow{r}\arrow{d}& E _y\otimes \mathcal{O}_{\mathbb P(E_y^*)}\arrow{r}\arrow{d} & \mathcal{O}_{\{y\}\times \mathbb P(E_y^*)}(1)\arrow{r}& 0\\
& H^1(E_*(-y))\otimes\mathcal O_{\mathbb P(E_y^*)}\arrow[equal]{r}\arrow{d} & H^1(E_*(-y))\otimes\mathcal O_{\mathbb P(E_y^*)}\arrow{d} & & \\
& 0 & 0 & & 
\end{tikzcd}
\end{eqnarray}
\end{center}
It is clear from the above diagram that $\Psi_{E_*,y}^*\mathcal W_L(-j)$ has degree -1. 
Let $\mathcal G'$ denote the image of $\Psi_{E_*,y}^*\mathcal G(-j)\,\longrightarrow\, \Psi_{E_*,y}^*\mathcal W_L(-j)$. By the choice of $F_*$,
we have $\mathcal G'$ to be isomorphic to $\Psi_{E_*,y}^*\mathcal G(-j)$ away from a subvariety of codimension 2 and hence
$$\deg \Psi_{E_*,y}^*\mathcal G(-j)\,\,=\,\, \deg \mathcal G'.$$

Let $K$ and $I$ respectively denote the kernel and the image of the induced homomorphism $\mathcal G'\,\longrightarrow\,
\Omega_{\mathbb P(E_y^*)}(1)$. Since $l$ is the image of $E_*(-y)_y$ in $(F_*)_y$ (see Diagram \eqref{D1}) and $s(y)\notin l$, we get
that $s\,\notin\, H^0(E_*(-y))$. It follows that the image of $s$ in $I$ is non-zero and hence we have $I\,\neq\, 0$. We can conclude that 
$\deg I\,\leq\, -1$ as $\Omega_{\mathbb P(E_y^*)}(1)$ is stable and has degree $-1$. We can also conclude that $\deg K \,\leq\, 0$
because $K$ is a subsheaf of the trivial sheaf $H^0(E_*(-y))\otimes\mathcal O_{\mathbb P(E_y^*)}$. Thus, we obtain that
$\deg \mathcal G'\,\leq\, -1$. 

As a consequence, we have
$${\frac{\deg\Psi_{E_*,y}^*\mathcal G(-j)}{{\rm rk}\ \mathcal G}}\,=\,\frac{-1}{{\rm rk}\ \mathcal G}
\,<\,\frac{-1}{{\rm rk}\ \mathcal W_L}\,=\,\frac{\deg \Psi_{E_*,y}^*\mathcal W_L(-j)}{{\rm rk}\ \mathcal W_L}.$$
Now using Remark \ref{positive}, we can conclude that
$$\frac{\deg \mathcal G}{{\rm rk}\ \mathcal G}\,=\,{\frac{\deg\Psi_{E_*,y}^*\mathcal G}{{\rm rk}\ \mathcal G}}
\,<\,\frac{\deg \Psi_{E_*,y}^*\mathcal W_L}{{\rm rk}\ \mathcal W_L}\,=\,\frac{\deg \mathcal W_L}{{\rm rk}\ \mathcal W_L}.$$
This completes the proof.
\end{proof}

\end{document}